\documentclass[11pt,reqno]{amsart}
\usepackage[T1]{fontenc}
\usepackage{lmodern,microtype}
\usepackage[margin=1.03in]{geometry}
\usepackage{amsmath,amssymb,mathtools,amsthm}
\usepackage{array,booktabs,longtable}
\usepackage{xcolor}
\usepackage[colorlinks=true,linkcolor=blue!50!black,
  citecolor=blue!50!black,urlcolor=blue!50!black]{hyperref}
\usepackage[capitalize,noabbrev]{cleveref}
\allowdisplaybreaks
\numberwithin{equation}{section}
\theoremstyle{plain}
\newtheorem{theorem}{Theorem}[section]
\newtheorem{lemma}[theorem]{Lemma}
\newtheorem{proposition}[theorem]{Proposition}
\DeclareMathOperator{\Frac}{Frac}
\DeclareMathOperator{\Spec}{Spec}
\DeclareMathOperator{\dist}{dist}
\newcommand{\QQ}{\mathbb Q}
\newcommand{\UU}{\mathcal U}
\newcommand{\mm}{\mathfrak m}

\title[Coherence and integral closure]{A one-dimensional coherent domain\\
with non-Pr\"ufer integral closure}

\author{Viet-Hoang Tran}
\address{Department of Mathematics, National University of Singapore, Singapore 119076}
\email{hoang.tranviet@u.nus.edu}
\urladdr{https://vh-tran.github.io/}

\author{Thieu N. Vo}
\address{Department of Computer Science, University of Bath, United Kingdom}
\email{ntv22@bath.ac.uk}

\author{Tan M. Nguyen}
\address{Department of Mathematics, National University of Singapore, Singapore 119076}
\email{tanmn@nus.edu.sg}
\urladdr{https://tanmnguyen89.github.io/}

\keywords{Coherent domain, integral closure, Pr\"ufer domain}
\hypersetup{pdftitle={A one-dimensional coherent domain with non-Pruefer integral closure},
  pdfauthor={Viet-Hoang Tran}}

\begin{document}
\raggedbottom
\begin{abstract}
In a question recorded in 1978, Vasconcelos asked whether the integral
closure of a one-dimensional coherent local domain in its fraction field
is Pr\"ufer. We construct such a domain whose integral closure is not
Pr\"ufer, giving a negative answer.
\end{abstract}
\maketitle

\section{Introduction}\label{sec:introduction}

All rings are commutative with identity. A domain $D$ is
\emph{coherent} if every finitely generated ideal is finitely presented,
and \emph{Pr\"ufer} if every nonzero finitely generated ideal is
invertible. Equivalently, coherence requires finite generation of
\[
 \ker\bigl(D^r\xrightarrow{(a_1,\ldots,a_r)}D\bigr)
 \qquad(r\geq1,\ a_i\in D).
\]
Thus coherence controls relations among finite sets of generators,
whereas Noetherianity requires every ideal to be finitely generated.
Both Noetherian and Pr\"ufer domains are coherent
\cite[\S6]{GeroldingerKimLoper2026}.

The \emph{integral closure} of $D$ in $\Frac D$ is
\[
 D'=\{x\in\Frac D:f(x)=0\text{ for some monic }f\in D[T]\}.
\]
The domain $D$ is
\emph{integrally closed} (or \emph{normal}) if $D'=D$.

Two one-dimensional results explain the motivation:
\begin{equation}\label{eq:classical-motivation}
 \begin{aligned}
 D\text{ Noetherian},\quad\dim D=1
     &\ \Longrightarrow\ D'\text{ Dedekind},\\
 D\text{ coherent},\quad D=D',\quad\dim D=1
     &\ \Longrightarrow\ D\text{ Pr\"ufer}.
 \end{aligned}
\end{equation}
The first is the Krull--Akizuki theorem
\cite[Tag~09IG]{StacksProject}; the second appears in
\cite[Corollary~11]{PapickCoherentOverrings1979}.
Since Dedekind domains are Pr\"ufer, these implications suggest replacing
Noetherianity by coherence: must every one-dimensional coherent domain
have Pr\"ufer integral closure? The issue is
whether the finite relations in $D$ impose enough control on its
integral elements. Because $\dim D'=\dim D$ and Pr\"ufer domains are
coherent, the question is equivalently whether $D'$ is coherent
whenever $D$ is one-dimensional and coherent.

The local form was already recorded by Papick in 1978
\cite[\S3]{Papick1978}. Glaz--Vasconcelos formulated the assertion as
Conjecture~C$_3$ in their study of flat ideals
\cite[p.~223]{GlazVasconcelos1984}. It subsequently appeared as
Problem~65 in Chapman--Glaz's 2000 collection
\cite{ChapmanGlaz2000} and Problem~3 in the 2014 collection of
Cahen--Fontana--Frisch--Glaz \cite{CahenFontanaFrischGlaz2014}.
The survey of Geroldinger--Kim--Loper still lists it as Problem~5
in the version dated 28~February~2026
\cite[\S6]{GeroldingerKimLoper2026}.

The positive results impose extra control on the base, its
integral closure, or its overrings. Examples include very finite
extensions, finite generation of the integral closure, coherence of
overrings, and stability of ideals; \cref{sec:literature} gives the
precise hypotheses and their relationships. A complementary result
shows the severity of a possible failure: in a one-dimensional normal
local domain that is not Pr\"ufer, the intersection of any two
incomparable principal ideals is not finitely generated
\cite[Remark~2.2]{GuerrieriLoper2021}. The 2026 survey highlights this
obstruction and the scarcity of known coherent examples outside the
Noetherian and Pr\"ufer classes \cite[\S6]{GeroldingerKimLoper2026}.

We give a negative answer, already for a local domain.

\begin{theorem}\label{thm:main}
There exists a one-dimensional coherent local domain whose integral closure
is not Pr\"ufer.
\end{theorem}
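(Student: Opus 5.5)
We will build the example as a pullback, using an ultraproduct to supply the non-Prüfer normal part. The guiding idea is that coherence of $D$ should come from a ring that is nearly a field, so that $D$ has very few finitely generated ideals needing control. Meanwhile the integral closure should carry a non-valuation structure that $D$ cannot detect through finite relations.

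The first step is to construct a one-dimensional normal local domain $V'$ that is not a valuation domain. We do this by choosing a field extension $L/K$ that is algebraic but not finite, placed inside the residue data of a rank-one valuation domain. A natural template is a pullback $T=\pi^{-1}(A)$, where $V$ is a valuation domain of $\Frac$-field $F$ with maximal ideal $\mm$ and residue field $k$, $\pi\colon V\to k$ is the residue map, and $A\subset k$ is a subring. The classical facts about such pullbacks then apply. $T$ is local iff $A$ is local. The integral closure of $T$ is the pullback of the integral closure of $A$ in $k$. And $T$ is Prüfer iff $A$ is a field and $k$ is algebraic over it in a suitable sense, namely $V$ is a valuation ring of $\Frac T$. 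Taking $A=K$ to be a field with $k$ algebraic over $K$ makes $T=K+\mm$ one-dimensional, local and integrally closed in the obvious sense when $K$ is algebraically closed in $k$. But then $T$ is Prüfer only when $k=K$. So we instead want $D=K+\mm$ with $k/K$ a purely inseparable or otherwise algebraic extension, so that the integral closure is $K^{\mathrm{alg}\cap k}+\mm$. That integral closure is not Prüfer whenever $k$ is not algebraic over the algebraic closure of $K$ in $k$, i.e.\ when $k$ has transcendental elements over $K$.

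The second step is to prove coherence. The known criterion (Brewer--Rutter, Gabelli--Houston) says a pullback $K+\mm$ is coherent iff $V$ is coherent, $\mm$ is finitely generated over $K+\mm$ (failing when $[k:K]=\infty$), and $[k:K]<\infty$ or other conditions hold. This is exactly why a naive pullback fails: finite-dimensionality of $k/K$ forces algebraicity and hence Prüfer closure. The main obstacle is therefore to evade this dichotomy.

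Our plan is to take $k=K(t)$ with $[k:K]$ infinite but to choose $V$ of rank one with $\mm=\mm^2$ (value group $\QQ$ or $\mathbb R$). In that setting the kernel computations for relations among $a_1,\dots,a_r\in D$ reduce to $K$-linear algebra on leading coefficients, and no finite generation of $\mm$ is required. Concretely, we verify coherence directly: for $a_1,\dots,a_r$, we order them by value and divide by the element of least value. This reduces to the case where some $a_i$ is a unit of $V$. We then show the relation module is generated by the finitely many $K$-linear relations among the residues $\pi(a_i/a_1)$ together with the "$\mm$-part", and that the $\mm$-part is handled by the principal relations because $\mm V$ is divisible. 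Making this reduction rigorous, especially when the residues span a $K$-space whose intersection behavior with $K$ matters, is the step we expect to be hardest. If it fails, we would replace $K\subset k$ by an ultraproduct $\UU$ of finite extensions to force the needed finiteness of relations uniformly.
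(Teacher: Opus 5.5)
Your construction fails at the step you flagged as hardest: the coherence of $D=K+\mm$ is false, not merely difficult.

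Take $0\ne m\in\mm$, $x\in k\setminus K$, and a lift $\tilde x\in V$ of $x$, which is a unit of $V$. A pair $(b_1,b_2)\in D^2$ is a relation on $(m,m\tilde x)$ exactly when $b_1=-b_2\tilde x$. Then $\pi(b_2)x=\pi(b_2\tilde x)\in K$ with $\pi(b_2)\in K$ forces $\pi(b_2)=0$. Conversely, every $b\in\mm$ gives a relation. Hence
\[
 \ker\bigl(D^2\xrightarrow{(m,\,m\tilde x)}D\bigr)=\{(-b\tilde x,b):b\in\mm\}\cong\mm
\]
as $D$-modules, so coherence of $D$ forces $\mm$ to be finitely generated over $D$.
\begin{itemize}
\item With your choice $\mm=\mm^2$ (dense value group), $\mm$ is the maximal ideal of the local ring $D$ and $\mm\cdot\mm=\mm$. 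By Nakayama, $\mm$ is not finitely generated. So the device meant to avoid finite generation of $\mm$ actually makes $D$ non-coherent whenever $K\ne k$.
\item In the discrete case, $\mm=mV\cong V$, so coherence forces $V$ to be module-finite over $D$. Then $[k:K]<\infty$, $k$ is algebraic over $K$, and $D'=V$ is Pr\"ufer.
\end{itemize}
So every coherent one-dimensional $K+\mm$ has Pr\"ufer integral closure. These pullbacks are exactly the pseudo-valuation domains (PVDs), and this is their row in Table~\ref{tab:positive-results}.

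Your fallback does not escape this obstruction. The computation above uses nothing about how $K\subset k$ arises, so replacing $K\subset k$ by an ultraproduct of field extensions changes nothing if you still form $K+\mm$.

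Suppose instead you take an ultraproduct of the rings $K_n+tk_n[[t]]$ with $[k_n:K_n]<\infty$. The analogous row $(t,t\tilde x_n)$ then needs $\dim_{K_n}\mm_n/\mm_n^2=[k_n:K_n]$ generators. These degrees must be unbounded along $\UU$ for the limiting residue extension to be transcendental. So there is no uniform bound, and the ultraproduct is neither coherent nor one-dimensional.

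The missing idea is a family with a genuinely uniform bound.
\begin{itemize}
\item The paper uses $A_n=\QQ[[t^n,t^{n+1}]]$, where relations on $r$ elements need at most $r$ generators independently of $n$ (\cref{lem:uniform}).
\item It then cuts $\prod_\UU A_n$ down to dimension one using the order-cut primes $P\subsetneq Q$ at scale $n^2$. The identity $aP=P$ for $a\notin P$ lifts relations through $A\to A/P$, and one then localizes at $Q$ (\cref{prop:ring}).
\item Non-Pr\"uferness is not detected through a valuation overring sharing the maximal ideal. That would make $R$ a PVD, subject to the obstruction above.
\item Instead it is detected through $z=\bar v/\bar u$ with irrationally spaced exponents. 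For this $z$, $R[z]/\mm R[z]\cong(R/\mm)[T]$, so $\dim R[z]\geq2$ (\cref{lem:separation}).
\end{itemize}
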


The construction separates two kinds of bounds. In the plane-curve
domains $A_n=\QQ[[t^n,t^{n+1}]]$, relations on $r$ elements require
at most $r$ generators, independently of $n$, while the conductor
scale grows quadratically. The first bound makes
$A=\prod_{\UU}A_n$ coherent. At the second scale, two prime order
cuts $P\subsetneq Q$ satisfy
\[
 aP=P\quad(a\notin P),\qquad
 R=(A/P)_{Q/P},\qquad \Spec R=\{0,\mm\}.
\]
The divisibility identity lifts relations through $A\to A/P$;
localization then gives the coherent one-dimensional domain $R$
of \cref{sec:construction}.

In \cref{sec:obstruction}, irrationally spaced exponents yield a
fraction $z\in\Frac R$ for which
\[
 R[z]/\mm R[z]\cong(R/\mm)[T].
\]
Thus $\dim R[z]\geq2$. A one-dimensional Pr\"ufer integral closure
would force every overring of $R$ to have dimension at most one,
giving the contradiction. The proof detects the failure through a
single overring and does not require an explicit description of the
integral closure.

\section{The construction}\label{sec:construction}

Fix a nonprincipal ultrafilter $\UU$ on $\{2,3,\ldots\}$ and set
\[
 A_n=\QQ[[t^n,t^{n+1}]]\subseteq\QQ[[t]],\qquad
 A=\prod_{\UU}A_n.
\]
Here $A_n$ is the image of $\QQ[[X,Y]]$ under
$(X,Y)\mapsto(t^n,t^{n+1})$. Write ``almost everywhere'' for a set
belonging to $\UU$; thus $[a_n]=[b_n]$ if $a_n=b_n$ almost everywhere.
The ultrafilter alternative makes $A$ a domain. Let $\nu_n$ be the
$t$-adic order on $\QQ((t))$, with $\nu_n(0)=\infty$. We use
\[
 \nu_n(ab)=\nu_n(a)+\nu_n(b),\qquad
 \nu_n\!\left(\sum_i a_i\right)\geq\min_i\nu_n(a_i),
\]
with equality in the second formula when the minimum is unique.
Define two order cuts:
\begin{align}
 P&=\bigl\{[a_n]:(\forall C\in\mathbb Z_{\geq1})\,
                  \{n:\nu_n(a_n)>Cn^2\}\in\UU\bigr\},\label{eq:P}\\
 Q&=\bigl\{[a_n]:(\exists\epsilon>0)\,
                  \{n:\nu_n(a_n)>\epsilon n^2\}\in\UU\bigr\}.
                  \label{eq:Q}
\end{align}
Each quantified bound may hold on a different set in $\UU$; only
finite intersections are used below.

\begin{lemma}\label{lem:curves}
For $H_n=\langle n,n+1\rangle$ and $e=qn+r\geq0$, $0\leq r<n$,
$e\in H_n\Longleftrightarrow q\geq r$. Consequently
\begin{equation}\label{eq:curve-facts}
 A_n=\bigoplus_{r=0}^{n-1}t^{r(n+1)}\QQ[[t^n]]
 \cong\QQ[[X,Y]]/(Y^n-X^{n+1}),\qquad
 t^{n^2}\QQ[[t]]\subseteq A_n.
\end{equation}
\end{lemma}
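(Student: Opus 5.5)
We first prove the numerical-semigroup criterion and then derive the three assertions of \eqref{eq:curve-facts} from it.

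\emph{The criterion.} Elements of $H_n$ have the form $e=an+b(n+1)=(a+b)n+b$ with $a,b\geq0$.

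For the forward direction, suppose $e\in H_n$ and write $e=an+b(n+1)$. Write $b=kn+r'$ with $0\leq r'<n$. Then
\[
 e=(a+b+k)n+r',
\]
so by uniqueness of division $r=r'$ and $q=a+b+k\geq b\geq r'=r$.

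For the converse, suppose $q\geq r$. Then
\[
 e=(q-r)n+r(n+1)
\]
exhibits $e\in H_n$.

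The same computation shows more: every $e\in H_n$ with residue $r$ modulo $n$ lies in $r(n+1)+n\mathbb Z_{\geq0}$. Indeed $q\geq r$ gives $e-r(n+1)=(q-r)n\geq0$.

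\emph{The direct sum decomposition.} Since $A_n$ is the image of $\QQ[[X,Y]]$, it is the closure of the span of the monomials $t^e$ with $e\in H_n$. Equivalently, $A_n$ consists of the power series supported on $H_n$: a power series in $t^n,t^{n+1}$ only involves exponents in $H_n$, and each monomial $t^e$ with $e\in H_n$ is attained.

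Conversely, we must check that every series supported on $H_n$ lies in $A_n$. Grouping exponents by residue $r$ gives
\[
 \sum_r t^{r(n+1)}f_r(t^n),
\]
and each summand is $Y^r f_r(X)$ evaluated. The sum is finite, with $r<n$, so it lies in $A_n$. The sum is direct because the residues are distinct. This yields
\[
 A_n=\bigoplus_{r=0}^{n-1} t^{r(n+1)}\QQ[[t^n]].
\]

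\emph{The presentation.} The surjection $\QQ[[X,Y]]\to A_n$ kills $Y^n-X^{n+1}$. Every element of $\QQ[[X,Y]]$ can be reduced modulo $Y^n-X^{n+1}$ to the form $\sum_{r<n}Y^r g_r(X)$. This is Weierstrass division, or direct division by the monic polynomial $Y^n-X^{n+1}$ in $Y$ over $\QQ[[X]]$; it works because $\QQ[[X,Y]]$ is $Y$-adically complete and $Y^n\equiv X^{n+1}$ lets us lower $Y$-degrees.

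The image of such a reduced element is $\sum_r t^{r(n+1)}g_r(t^n)$. By directness of the sum, this image vanishes only if every $g_r=0$. Hence the kernel is exactly $(Y^n-X^{n+1})$, giving
\[
 A_n\cong\QQ[[X,Y]]/(Y^n-X^{n+1}).
\]

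\emph{The conductor inclusion.} Take $e\geq n^2$ and write $e=qn+r$. Then $qn\geq n^2-r>n^2-n$, so $q\geq n>r$. By the criterion every such $e$ lies in $H_n$, and so $t^{n^2}\QQ[[t]]$ is supported on $H_n$ and contained in $A_n$.

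The only mildly delicate step is the kernel computation, i.e.\ justifying the division algorithm in the power series ring. Everything else is elementary arithmetic of residues.
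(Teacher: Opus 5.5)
Your proof is correct and follows essentially the paper's route: the same semigroup computation, grouping by residues mod $n$, reduction modulo $Y^n-X^{n+1}$ to $\sum_{r<n}g_r(X)Y^r$ followed by separation of residue classes to get the kernel, and the conductor read off from the criterion; your appeal to Weierstrass division replaces the paper's explicit remainder formula. One small correction: the reduction converges $(X,Y)$-adically rather than $Y$-adically, since the quotients $(Y^{kn}-X^{k(n+1)})/(Y^n-X^{n+1})$ contain the pure power $X^{(k-1)(n+1)}$ and are controlled only by their total degree $\geq n(k-1)$ (as the paper notes), though Weierstrass division already covers this.
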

\begin{proof}
If $q\geq r$, then $e=(q-r)n+r(n+1)$. Conversely, from
$e=an+b(n+1)$ and $b=kn+r$ one obtains $q=a+b+k\geq r$.
Grouping series supported on $H_n$ by their exponents modulo $n$
gives the direct sum. The conductor inclusion follows from
$e\geq n(n-1)\Rightarrow q\geq n-1\geq r$.

For the presentation, set $f=Y^n-X^{n+1}$. Modulo $f$, every
$F=\sum_{b\geq0}f_b(X)Y^b$ has the remainder
\[
 \sum_{r<n}g_r(X)Y^r,\qquad
 g_r(X)=\sum_{k\geq0}f_{kn+r}(X)X^{k(n+1)}.
\]
For $k\geq1$, every monomial of $(Y^{kn}-X^{k(n+1)})/f$ has total
degree at least $n(k-1)$, so the quotient series and displayed sums
converge formally. Substitution sends distinct remainder terms to distinct
exponent classes modulo $n$. A zero image therefore forces every
$g_r=0$, proving that the kernel is $(f)$.
\end{proof}

\begin{lemma}\label{lem:uniform}
For every $n\geq2$ and $r\geq1$, the relation module of any $r$
elements of $A_n$ has at most $r$ generators. Hence $A$ is coherent.
\end{lemma}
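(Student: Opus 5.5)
We argue in two stages: first a uniform bound on relation modules over each $A_n$, then a transfer of that bound through the ultraproduct.

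\textbf{Step 1 (uniform bound in $A_n$).} By \cref{lem:curves}, $A_n\cong\QQ[[X,Y]]/(Y^n-X^{n+1})$ is a one-dimensional Cohen--Macaulay local ring. It is a domain, and it is a free module of rank $n$ over the DVR $S=\QQ[[t^n]]$ with basis $t^{r(n+1)}$, $0\le r<n$. Take $a_1,\dots,a_r\in A_n$, not all zero; if all are zero, the relation module is $A_n^r$, generated by the $r$ unit vectors. Let $K$ be the kernel of $A_n^r\to A_n$. Its image $I=(a_1,\dots,a_r)$ is a nonzero ideal, so $I\cong$ a torsion-free module of rank one. Thus $K$ is a second syzygy-like module of rank $r-1$. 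The key point is that $K$ is a maximal Cohen--Macaulay module, because its depth is at least $\min(2,\operatorname{depth} I+1)\ge1$ and $\dim A_n=1$. It is also reflexive-like: it is the kernel of a map into the torsion-free module $A_n$.

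For a maximal Cohen--Macaulay module $M$ over a one-dimensional analytically irreducible Gorenstein local domain, the minimal number of generators satisfies $\mu(M)\le e\cdot\operatorname{rank}M$, where $e$ is the multiplicity. Here $e=n$, which is far too big, so this crude route fails. Instead I would use that $K$ is a first syzygy of $I$, i.e.\ $K\cong\Omega(I)$, and compute $\mu(K)$ via the exact sequence
\[
0\to K\to A_n^r\to I\to0.
\]
Applying $-\otimes k$ gives $\mu(K)=r-\mu(I)+\dim_k\operatorname{Tor}_1(I,k)=r-\mu(I)+\mu(\Omega I)$-type bookkeeping. The cleaner route is Gorenstein duality: since $A_n$ is a plane curve, hence Gorenstein, every MCM module $M$ without free summands satisfies $M\cong\Omega(\Omega^{-1}M)$. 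Moreover, by Eisenbud's matrix factorization theory, $\mu(\Omega N)=\mu(N)$ for an MCM module $N$ without free summands over the hypersurface $\QQ[[X,Y]]/(f)$. Write $I=I'$; it is MCM of rank one. Splitting the presentation as $A_n^r=A_n^{r-\mu(I)}\oplus A_n^{\mu(I)}$, where the first summand consists of redundant generators and contributes free generators of $K$, we get
\[
\mu(K)\le (r-\mu(I))+\mu(\Omega I)=(r-\mu(I))+\mu(I)=r.
\]
If $I$ is free (principal), then $\Omega I=0$, and the bound only improves. The step I expect to be the main obstacle is checking the matrix-factorization claim: over $\QQ[[X,Y]]$, $I$ has a square presentation matrix $\varphi$ of size $\mu(I)$ with $\varphi\psi=f\cdot\mathrm{id}$, and $\Omega I=\operatorname{coker}\psi$ also needs $\mu(I)$ generators. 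Before relying on it, I would verify this via the Hilbert--Burch/Auslander--Buchsbaum argument: $I$ has projective dimension $1$ over the regular ring $\QQ[[X,Y]]$, so its presentation is square.

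\textbf{Step 2 (transfer to $A$).} Let $[a^{(1)}_n],\dots,[a^{(r)}_n]\in A$. For each $n$, choose generators $v^{(1)}_n,\dots,v^{(r)}_n\in A_n^r$ of the relation module, padding with zeros so that there are exactly $r$. Set $v^{(j)}=[v^{(j)}_n]\in A^r$; these are relations in $A$. Conversely, suppose $[b_n]$ is a relation. Then $b_n$ is a relation almost everywhere, so $b_n=\sum_j c^{(j)}_n v^{(j)}_n$ with $c^{(j)}_n\in A_n$, and hence $[b_n]=\sum_j[c^{(j)}_n]v^{(j)}$. This works because the number of generators is bounded uniformly by $r$ and no bound on coefficients is needed. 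Since the ultraproduct of rings is computed componentwise, the argument is essentially Łoś's theorem. Therefore every finitely generated ideal of $A$ is finitely presented, and $A$ is coherent.
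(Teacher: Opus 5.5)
Your proof is correct, and its core is the fact the paper uses: a nonzero ideal $I\subseteq A_n$ has projective dimension one over the regular ring $\QQ[[X,Y]]$ and is annihilated by $f=Y^n-X^{n+1}$. It therefore has a square free presentation there, and reducing that presentation modulo $f$ bounds the relations over $A_n$.

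The difference is in packaging. The paper applies this fact directly to the given, possibly non-minimal, surjection $\QQ[[X,Y]]^r\to I$. Its kernel is free of rank $r$, and the $A_n$-relation module is that kernel modulo $f\,\QQ[[X,Y]]^r$, so the bound $r$ is immediate. Your passage through the minimal syzygy, the splitting $K\cong\Omega I\oplus A_n^{r-\mu(I)}$, and matrix factorizations is therefore a detour. It does buy the exact count: $\mu(K)=r$ for non-principal $I$ and $r-1$ for principal $I$.

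The paper also proves freeness of the kernel by hand rather than citing Auslander--Buchsbaum. It uses that $X$ acts injectively on $I$, embeds the kernel modulo $X$ in $\QQ[[Y]]^r$, and lifts a basis by Nakayama. If you cite Auslander--Buchsbaum instead, say explicitly that the presentation is square because $fI=0$ makes $I$ torsion over $\QQ[[X,Y]]$.

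One slip in the discarded material: the depth lemma gives $\operatorname{depth}K\geq\min(1,\operatorname{depth}I+1)$, not $\min(2,\cdot)$, since $A_n^r$ has depth one. Your transfer to the ultraproduct is the paper's argument.
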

\begin{proof}
Write $\operatorname{Rel}_D(a_i)=\ker((a_i):D^r\to D)$.
Set $S=\QQ[[X,Y]]$, $f=Y^n-X^{n+1}$, and
$I=(a_1,\ldots,a_r)A_n$. If $I=0$, the relation module is $A_n^r$.
Otherwise let $K=\ker(S^r\twoheadrightarrow I)$.
We use standard power-series, DVR, and Nakayama facts
\cite[Tags~0306, 0AUW, 00DV]{StacksProject}.
Since $X$ acts on $I$ as the nonzero element $t^n$,
\[
 K\cap XS^r=XK,\qquad K/XK\hookrightarrow\QQ[[Y]]^r.
\]
This submodule is free of rank $\ell\leq r$. Lifting a basis gives
$\psi:S^\ell\to K$, surjective by Noetherianity and Nakayama.
For $L=\ker\psi$, the isomorphism modulo $X$ gives $L\subseteq XS^\ell$.
If $Xw\in L$, injectivity of $X$ on $K\subseteq S^r$ gives $w\in L$.
Thus $L=XL$, so Noetherianity and Nakayama give $L=0$.
Since $f\ne0$ annihilates $I$, tensoring with $\Frac S$ gives
$\ell=r$. Therefore
\[
 \operatorname{Rel}_{A_n}(a_1,\ldots,a_r)=K/fS^r
\]
is generated by the images of an $S$-basis of $K$.

For a row of length $r$ over $A$, choose $r$ relation generators in
each coordinate, padding by zeros. Every relation over $A$ holds
coordinatewise on a set in $\UU$, where it is a linear combination
of these generators; extend the coefficients by zero elsewhere.
Their classes therefore generate the relation module over $A$.
A ring is coherent exactly when every finite row has a finitely
generated relation module
\cite[\S10.90]{StacksProject}. Hence $A$ is coherent.
\end{proof}

\begin{proposition}\label{prop:ring}
The ideals $P\subsetneq Q$ are prime. The ring
\[
 R=(A/P)_{Q/P}
\]
is a one-dimensional coherent local domain, with maximal ideal
$\mm=(Q/P)R$. Writing bars for images in $R$, the element
$u=[t^{n^2}]$ satisfies $0\ne\bar u\in\mm$.
\end{proposition}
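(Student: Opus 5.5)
The plan is to attach to each $a=[a_n]\in A$ its growth class $\nu_n(a_n)/n^2$ along $\UU$ and read every claim off the two displayed valuation rules. Throughout I use that each $\nu_n$ is nonnegative on $A_n\subseteq\QQ[[t]]$.

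\emph{Primality and strictness.} $P$ and $Q$ are additive subgroups by the min-inequality, intersecting the finitely many sets in $\UU$ involved. They absorb multiplication by $A$ because $\nu_n(ca)\geq\nu_n(a)$.
\begin{itemize}
\item For $P$: if $a,b\notin P$, there are $C_1,C_2$ with $\nu_n(a_n)\leq C_1n^2$ and $\nu_n(b_n)\leq C_2n^2$ almost everywhere. Additivity then gives $\nu_n(a_nb_n)\leq(C_1+C_2)n^2$ almost everywhere, so $ab\notin P$.
\item For $Q$: if $a,b\notin Q$, then for every $\epsilon>0$ both orders are at most $\epsilon n^2$ almost everywhere. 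Taking $\epsilon/2$ for each factor shows $ab\notin Q$.
\item $1\notin Q$, so both ideals are proper.
\item $P\subseteq Q$ is immediate. The element $u=[t^{n^2}]$ has $\nu_n=n^2$, so it lies in $Q$ (take $\epsilon=1/2$) but not in $P$ (take $C=1$). Hence $P\subsetneq Q$, and $A/P$ is a domain.
\end{itemize}

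\emph{Divisibility identity $aP=P$ for $a\notin P$.} Fix $C_0$ with $\nu_n(a_n)\leq C_0n^2$ almost everywhere, and let $b\in P$. For each $C$, almost everywhere
\[
\nu_n(b_n/a_n)>(C+C_0)n^2-C_0n^2=Cn^2.
\]
Taking $C=1$ and using the conductor inclusion $t^{n^2}\QQ[[t]]\subseteq A_n$ of \cref{lem:curves}, we get $b_n/a_n\in A_n$ almost everywhere. Thus $b=a\cdot[b_n/a_n]$ with $[b_n/a_n]\in P$.

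The same conductor argument gives the step I expect to be the heart of the dimension count: if $x\notin P$ and $y\in Q$, then $y^k\in xA$ for some $k$. Indeed, choose $C$ with $\nu_n(x_n)\leq Cn^2$ and $\epsilon$ with $\nu_n(y_n)>\epsilon n^2$ almost everywhere. Pick $k$ with $k\epsilon\geq C+1$. Then $\nu_n(y_n^k/x_n)\geq n^2$ almost everywhere, so $y_n^k/x_n\in A_n$.

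\emph{Dimension.} Let $P'$ be a prime with $P\subsetneq P'\subseteq Q$, and choose $x\in P'\setminus P$. For any $y\in Q$, the previous paragraph gives $y^k\in xA\subseteq P'$, so $y\in P'$ and $P'=Q$. Therefore the primes of $A/P$ contained in $Q/P$ are exactly $0$ and $Q/P$. It follows that $R$ is a local domain with $\Spec R=\{0,\mm\}$ and $\mm=(Q/P)R$, so $\dim R=1$. Since $u\in Q\setminus P$, the image $\bar u$ is nonzero and lies in $\mm$.

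\emph{Coherence.} By \cref{lem:uniform}, $A$ is coherent, and localization preserves coherence. So it suffices to show $A/P$ is coherent, which is not automatic because $P$ need not be finitely generated.

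Take $\bar a_1,\ldots,\bar a_r\in A/P$ with lifts $a_i$. If all $a_i\in P$, the relation module is free and there is nothing to prove. Otherwise, define $m_n=\min_i\nu_n(a_{i,n})$; the ultrafilter selects one index $j$ with $\nu_n(a_{j,n})=m_n$ almost everywhere. Then $a_j\notin P$, since some $a_i\notin P$ and $\nu_n(a_{j,n})\leq\nu_n(a_{i,n})$.

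Now let $(c_i)$ satisfy $\sum c_ia_i\in P$. The divisibility identity writes $\sum c_ia_i=a_jp$ with $p\in P$. Replacing $c_j$ by $c_j-p$ gives an $A$-relation on $(a_1,\ldots,a_r)$ with the same image mod $P$. Hence $\operatorname{Rel}_{A/P}(\bar a_i)$ is the image of $\operatorname{Rel}_A(a_i)$, which is finitely generated because $A$ is coherent.

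The main obstacle is this lifting step. Everything rests on the identity $aP=P$, which in turn uses the uniform conductor $t^{n^2}$ at exactly the scale where $P$ and $Q$ are defined.
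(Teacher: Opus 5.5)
Your proof is correct and follows the paper's argument essentially step for step. It establishes primality via additive order bounds, derives the divisibility identity $aP=P$ from the uniform conductor $t^{n^2}\QQ[[t]]\subseteq A_n$, lifts relations from $A/P$ to $A$ for coherence, and uses power-divisibility $y^k\in xA$ (the paper's $\sqrt{aA}=Q$) to get $\Spec R=\{0,\mm\}$; your minimal-order choice of $j$ is harmless but unnecessary, since $aP=P$ holds for every $a\notin P$.
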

\begin{proof}
The order inequalities make $P,Q$ proper ideals. If $a,b\notin P$,
choose respective order bounds $Cn^2,Dn^2$ almost everywhere;
then $\nu_n(a_nb_n)\leq(C+D)n^2$ almost everywhere, so $ab\notin P$.
If $a,b\notin Q$, for each fixed $\epsilon>0$ both orders are at most
$\epsilon n^2/2$ almost everywhere, giving $ab\notin Q$.
Thus $P,Q$ are prime, with $P\subsetneq Q$ witnessed by
$u\in Q\setminus P$ and \eqref{eq:curve-facts}.

For $a\notin P$ we claim that
\begin{equation}\label{eq:divisible}
 aP=P.
\end{equation}
Choose $C\geq1$ with $\nu_n(a_n)\leq Cn^2$ almost everywhere.
For $p\in P$, define $b_n=p_n/a_n$ where this bound holds and
$\nu_n(p_n)>(C+1)n^2$, and set $b_n=0$ elsewhere.
The conductor inclusion puts $b_n$ in $A_n$, and $ab=p$.
For every fixed integer $D\geq1$, intersecting the bound on $a_n$
with $\nu_n(p_n)>(C+D)n^2$ gives $\nu_n(b_n)>Dn^2$ almost everywhere.
Thus this single $b$ lies in $P$, proving \eqref{eq:divisible}.

For a nonzero finite row over $A/P$, choose a lift $(a_i)$ with
$a_j\notin P$. Any lifted relation $(b_i)$ has error
$p=\sum_i a_i b_i\in P$. By \eqref{eq:divisible}, write $p=a_jc$
with $c\in P$; replacing $b_j$ by $b_j-c$ gives an exact relation
with the same reduction. Hence
\[
 \operatorname{Rel}_A(a_i)\twoheadrightarrow
 \operatorname{Rel}_{A/P}(a_i+P).
\]
The relation criterion gives coherence of $A/P$ (the zero row is
immediate). Exactness of localization gives coherence of $R$.
Primality makes $R$ a local domain with the stated maximal ideal.

For $a\in Q\setminus P$, we next show
\begin{equation}\label{eq:radical}
 \sqrt{aA}=Q.
\end{equation}
Choose $C$ as above. For $b\in Q$, choose $\epsilon>0$ with
$\nu_n(b_n)>\epsilon n^2$ almost everywhere, and then an integer
$N\geq1$ with $N\epsilon>C+1$.
Then $b_n^N/a_n$ has order greater than $n^2$ on the common set.
The conductor inclusion, with zero coordinates elsewhere, yields
$b^N\in aA$. The reverse inclusion follows from primality of $Q$.
If $P\subsetneq\mathfrak p\subseteq Q$ is prime, choose
$a\in\mathfrak p\setminus P$; \eqref{eq:radical} forces
$\mathfrak p=Q$. Prime correspondence gives
$\Spec R=\{0,\mm\}$, and $0\ne\bar u\in\mm$ proves $\dim R=1$.
\end{proof}

\section{The obstruction to Pr\"ufer integral closure}\label{sec:obstruction}

Set $F=\Frac R$ and
\[
 \alpha=\sqrt2-1,\qquad d_n=\lfloor\alpha n\rfloor,\qquad
 v=[t^{n^2+d_n}],\qquad z=\bar v/\bar u\in F.
\]
The conductor gives $u,v\in A$, and their orders are at most $2n^2$,
so $u,v\notin P$. Thus $z\in F$, although $t^{d_n}$ need not lie in
$A_n$. Irrationality will separate the low-order terms of a relation
for $z$.

\begin{lemma}\label{lem:separation}
Every polynomial in $R[T]$ vanishing at $z$ has all coefficients in
$\mm$. Consequently
\[
 R[z]/\mm R[z]\cong(R/\mm)[T],\qquad \dim R[z]\geq2.
\]
\end{lemma}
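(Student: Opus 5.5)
The plan is to argue by contradiction at a single coordinate level. Suppose $g(T)=\sum_{i=0}^k \gamma_iT^i\in R[T]$ satisfies $g(z)=0$, and some $\gamma_j\notin\mm$. Each $\gamma_i$ has the form $\bar c_i/\bar s$ with $c_i\in A$ and $s\in A\setminus Q$; the denominator $s$ is common to all $i$. Multiply by $\bar s\bar u^k$. Since $R$ is a localization of $A/P$ and $A/P$ is a domain, this yields
\[
 \sum_{i=0}^k c_i\,v^i u^{k-i}\in P\quad\text{in }A,
\]
with $c_j\notin Q$ for some $j$.

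The plan is then to convert this into a coordinatewise statement. Coordinatewise the sum is $t^{kn^2}\sum_i c_{i,n}t^{i d_n}$. Membership in $P$ gives, in particular, that this has $\nu_n>(k+1)n^2$ almost everywhere. Hence $h_n:=\sum_i c_{i,n}t^{id_n}$ has $\nu_n(h_n)>n^2$ almost everywhere. Let $J=\{i:c_i\notin Q\}\ni j$.

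Fix a small $\epsilon>0$, to be chosen below. For $i\notin J$ we have $c_i\in Q$, but only with its own $\epsilon_i$. So I would take $\epsilon<\min_i\epsilon_i$; then those terms have $\nu_n>\epsilon n^2$ almost everywhere. For $i\in J$, $\nu_n(c_{i,n})\leq\epsilon n^2/2$ almost everywhere.

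The core step is a support-separation argument using \cref{lem:curves}. Every exponent $e<\epsilon n^2$ occurring in $c_{i,n}\in A_n$ has the form $e=qn+r$ with $r\leq q<\epsilon n$. Write $id_n=m_in+s_i$ with $0\leq s_i<n$. Since $d_n=\alpha n+O(1)$, we get $s_i=\{i\alpha\}n+O(k)$. The shifted exponent $e+id_n$ therefore has residue mod $n$ in the cyclic window $[s_i,s_i+\epsilon n+O(k))$.

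Because $\alpha$ is irrational, the fractional parts $\{i\alpha\}$, $0\leq i\leq k$, are pairwise distinct. Let $\delta_k>0$ be their minimal cyclic separation. Choosing $\epsilon<\delta_k/2$, the windows for distinct $i$ are disjoint for large $n$. Thus the monomials of degree $<\epsilon n^2$ (up to $O(kn)$) in the different summands $c_{i,n}t^{id_n}$ have pairwise distinct exponents, and no cancellation can occur among them.

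Taking $i\in J$ and the lowest-order monomial of $c_{i,n}$, whose order is $\leq\epsilon n^2/2$, we get a term of $h_n$ of order $\leq\epsilon n^2/2+kn<\epsilon n^2$. This term cannot be cancelled by the other $J$-terms, by the disjoint windows, nor by the non-$J$-terms, whose orders exceed $\epsilon n^2$. So $\nu_n(h_n)<n^2$ almost everywhere, a contradiction. I expect this bookkeeping to be the main obstacle: the $O(k)$ rounding errors, ensuring the window bound uses $q<\epsilon n$, and choosing one $\epsilon$ compatible with all $c_i\in Q$ simultaneously, all on a single set in $\UU$.

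For the consequences, I would consider the natural surjection $(R/\mm)[T]\to R[z]/\mm R[z]$, $T\mapsto z$. If $\bar g$ maps to $0$, then $g(z)=\sum_l \mu_l z^l$ with $\mu_l\in\mm$. So $g-\sum_l\mu_lT^l$ vanishes at $z$, and by the first part all its coefficients lie in $\mm$. Hence $g\in\mm[T]$ and the map is injective.

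Then $(R/\mm)[T]$ has the chain of primes $0\subsetneq(T)$. Pulling it back gives primes $\mm R[z]\subseteq\mathfrak q_1\subsetneq\mathfrak q_2$ of $R[z]$. Moreover $\mathfrak q_1\ne0$, since it contains $\bar u\ne0$ from \cref{prop:ring}. Since $R[z]\subseteq F$ is a domain, $0\subsetneq\mathfrak q_1\subsetneq\mathfrak q_2$ is a chain of length two, so $\dim R[z]\geq2$.
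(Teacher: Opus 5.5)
Your proof is correct and follows essentially the same route as the paper. You clear denominators to get $\sum_i c_i v^i u^{k-i}\in P$, use the semigroup criterion of \cref{lem:curves} to confine low-order exponents to residues in $[0,\epsilon n)$, use irrationality of $\alpha$ to keep the shifts by $id_n$ apart modulo $n$, and derive the quotient isomorphism and the chain $0\subsetneq\mm R[z]\subsetneq\mm R[z]+zR[z]$ as the paper does. The only variation is that you separate all monomials of order below $\epsilon n^2$ by residue windows, whereas the paper separates only the leading exponents of the terms with $c_i\notin Q$ and disposes of the others by the order gap; your stronger separation actually makes the common lower bound for the $c_i\in Q$ unnecessary.
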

\begin{proof}
Suppose $\sum_{i=0}^d c_i z^i=0$ and some $c_i\notin\mm$;
necessarily $d\geq1$. Clear a common denominator to write
$c_i=\bar b_i/\bar s$, with $b_i\in A$ and $s\notin Q$.
Then $J=\{i:b_i\notin Q\}\ne\varnothing$ and, since
$A/P\hookrightarrow F$,
\begin{equation}\label{eq:error}
 h=\sum_{i=0}^d b_i v^i u^{d-i}\in P.
\end{equation}
For this fixed degree, irrationality gives
\[
 \delta=\min_{1\leq k\leq d}\dist(k\alpha,\mathbb Z)>0.
\]
Choose a common $\epsilon>0$ with
$\nu_n(b_{i,n})>\epsilon n^2$ almost everywhere for all $i\notin J$
(take $\epsilon=1$ if there are none), and fix
$0<\eta<\min\{\epsilon/4,\delta/4,1/4\}$.
On a common set $E\in\UU$ these lower bounds hold,
$\nu_n(b_{i,n})\leq\eta n^2$ for every $i\in J$, and
$d/n<\min\{\delta/4,\eta\}$.

For $n\in E$ and $i\in J$, write the leading exponent as
$e_{i,n}=q_{i,n}n+r_{i,n}$, $0\leq r_{i,n}<n$.
The semigroup criterion gives
\[
 0\leq r_{i,n}/n\leq q_{i,n}/n\leq e_{i,n}/n^2\leq\eta.
\]
The shifted orders $e_{i,n}+id_n$ are distinct for $i\in J$.
Otherwise equality for $i\ne j$, using
$q_{i,n}-q_{j,n}\in\mathbb Z$, would give
\[
 \delta\leq\dist((i-j)\alpha,\mathbb Z)
 \leq\frac{|r_{i,n}-r_{j,n}|}{n}
      +|i-j|\left|\alpha-\frac{d_n}{n}\right|
 <\eta+\frac dn<\frac\delta2.
\]
For $i\in J$, the order of $b_{i,n}t^{id_n}$ is less than
$2\eta n^2<\epsilon n^2$; for $i\notin J$ it exceeds
$\epsilon n^2$. Thus $g_n=\sum_i b_{i,n}t^{id_n}$ has a unique
summand of least order, and $\nu_n(g_n)<2\eta n^2<n^2$.
Since $h_n=t^{dn^2}g_n$, this gives
$\nu_n(h_n)<(d+1)n^2$ on $E$, contradicting \eqref{eq:error}.

The evaluation kernel lies in $\mm R[T]$, giving the quotient
isomorphism. The primes $(0)\subsetneq(T)$ of $(R/\mm)[T]$ yield
the prime chain
\[
 0\subsetneq\mm R[z]\subsetneq\mm R[z]+zR[z].
\]
The first inclusion is strict since $0\ne\bar u\in\mm R[z]$;
hence $\dim R[z]\geq2$.
\end{proof}

\begin{lemma}\label{lem:prufer-overrings}
Every overring of a one-dimensional Pr\"ufer domain has dimension
at most one.
\end{lemma}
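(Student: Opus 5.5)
The plan is to use the standard fact that every overring of a Prüfer domain is a localization-like intersection of localizations at primes, and more precisely that it is flat over the base; then I will show that the nonzero primes of the overring all contract to maximal ideals of the base.

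Let $D$ be a one-dimensional Prüfer domain and $T$ an overring, so $D\subseteq T\subseteq\Frac D$. Let $\mathfrak q\subsetneq\mathfrak q'$ be primes of $T$, and set $\mathfrak p=\mathfrak q\cap D$ and $\mathfrak p'=\mathfrak q'\cap D$. The key input is that for a Prüfer domain $D$ and any prime $\mathfrak q$ of an overring $T$, one has $T_{\mathfrak q}=D_{\mathfrak p}$. The inclusion $D_{\mathfrak p}\subseteq T_{\mathfrak q}$ is clear. Conversely, $D_{\mathfrak p}$ is a valuation domain, and $T_{\mathfrak q}$ is an overring of it that dominates it, since $\mathfrak q T_{\mathfrak q}\cap D_{\mathfrak p}=\mathfrak pD_{\mathfrak p}$. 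An overring of a valuation domain $V$ is a localization of $V$ at a prime, and it dominates $V$ only if it equals $V$. Hence $T_{\mathfrak q}=D_{\mathfrak p}$.

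Granting this, the map $\mathfrak q\mapsto\mathfrak q\cap D$ is injective and order-preserving on chains. Indeed, $\mathfrak q$ is recovered as $\mathfrak pD_{\mathfrak p}\cap T$, because $\mathfrak q=\mathfrak qT_{\mathfrak q}\cap T=\mathfrak pD_{\mathfrak p}\cap T$. So a chain $\mathfrak q\subsetneq\mathfrak q'$ gives $\mathfrak p\subsetneq\mathfrak p'$. A chain of length two in $T$ would then give one of length two in $D$, contradicting $\dim D=1$. Therefore $\dim T\le1$.

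The main obstacle is the valuation-theoretic input, namely that overrings of a valuation domain are its localizations at primes. I would cite this as standard, for instance from Gilmer's \emph{Multiplicative Ideal Theory}, or prove it quickly. For $x\in W\setminus V$, where $W$ is an overring of $V$, we have $x^{-1}\in\mm_V$. Set $\mathfrak p=\bigcap$ over such $x$ of the ideals $x^{-1}V$, or more simply take $\mathfrak p=\mm_W\cap V$ and check that $W=V_{\mathfrak p}$. For domination, note that if $x\in W\setminus V$ then $x^{-1}\in\mm_V$ is a unit of $W$, so $\mm_V\not\subseteq\mm_W$. Thus domination forces $W=V$, which is exactly what the argument above uses.
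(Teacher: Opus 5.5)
Your proof is correct and essentially the paper's. Both arguments show $T_{\mathfrak q}=D_{\mathfrak p}$, because an overring dominating the valuation domain $D_{\mathfrak p}$ must equal it ($x^{-1}\in\mathfrak pD_{\mathfrak p}$ would become a unit). The only difference is the dimension count: you use injectivity of contraction, the paper uses $\dim T=\sup_{\mathfrak q}\dim T_{\mathfrak q}$. The flatness remark and the classification of overrings of valuation domains as localizations are not needed; your own sketch of that classification is incomplete, but only the domination step enters your argument.
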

\begin{proof}
Let $D\subseteq E\subseteq\Frac D$, with $D$ Pr\"ufer. For
$\mathfrak q\in\Spec E$ and $\mathfrak p=\mathfrak q\cap D$,
the local ring $E_{\mathfrak q}$ dominates $D_{\mathfrak p}$.
The latter is a valuation domain: its nonzero finitely generated
ideals are invertible, hence principal, and a local B\'ezout domain
is a valuation domain. If $x\in E_{\mathfrak q}\setminus D_{\mathfrak p}$,
then $x^{-1}\in\mathfrak pD_{\mathfrak p}\subseteq
\mathfrak qE_{\mathfrak q}$, contradicting $x x^{-1}=1$.
Thus $E_{\mathfrak q}=D_{\mathfrak p}$, and
$\dim E=\sup_{\mathfrak q}\dim E_{\mathfrak q}\leq\dim D$.
\end{proof}

\begin{proof}[Proof of \cref{thm:main}]
By \cref{prop:ring}, $R$ is a one-dimensional coherent local domain.
Let $B$ be its integral closure in $F$; then $\Frac B=F$.
Integral extensions preserve dimension \cite[Tag~00OK]{StacksProject},
so $\dim B=1$.
If $B$ were Pr\"ufer, \cref{lem:prufer-overrings} would give
$\dim B[z]\leq1$. Since $B[z]$ is generated over $R[z]$ by elements
integral over $R$, the extension $R[z]\subseteq B[z]$ is integral.
Dimension preservation and \cref{lem:separation} give
\[
 2\leq\dim R[z]=\dim B[z]\leq1,
\]
a contradiction.
\end{proof}

\section{Positive criteria and the obstruction}\label{sec:literature}

The positive results concern finiteness in the base ring, preservation
of coherence in extensions, and ideal-theoretic conditions on the
integral closure. We compare these approaches and identify the obstruction
supplied by our example. Throughout, $D$ is a one-dimensional coherent
domain, $K=\Frac D$, and $D'$ is its integral closure in $K$, unless weaker
hypotheses are stated. An overring lies between $D$ and $K$.

\subsection{Finiteness and extension properties}
Krull--Akizuki gives more than Pr\"ufer integral closure in the Noetherian
case: $D'$ is Dedekind. Even the Mori and $H$ conditions force a
one-dimensional coherent domain to be Noetherian
\cite[p.~225]{GlazVasconcelos1984}. Weakening Noetherianity in these
directions therefore does not reach the general problem. A different
approach asks whether coherence passes to $D'$. In dimension one, it
is enough that $D'$ be locally finite-conductor; $D'$ module-finite
over $D$ is one sufficient condition
\cite[Thm.~3.7, Cor.~3.9]{PicozzaTartarone2008}.
Finite conductor means that $aD\cap bD$ is finitely generated for all
$a,b\in D$; coherence implies this by applying the relation criterion
to $(a,-b)$.

For a nonzero ideal $J$ of $D$, put
\[
 J^{-1}=(D:J),\qquad J_v=(J^{-1})^{-1},\qquad
 J_t=\bigcup_{\substack{L\subseteq J\\L\text{ finitely generated}}}L_v
 \quad(J\ne0),
\]
with $0_v=0$. Divisorial means $J=J_v$; Mori means ACC on divisorial
ideals. The $H$ condition \cite[\S3]{GlazVasconcelos1977} is
\[
 J^{-1}=D\ \Longrightarrow\
 \exists L\subseteq J\text{ finitely generated with }L^{-1}=D.
\]
A DW domain has every maximal ideal a $t$-ideal; this is automatic in
dimension one \cite[Prop.~2.9]{PicozzaTartarone2008}. Thus the normal
DW finite-conductor criterion applies to $D'$ whenever its localizations
are finite-conductor. The missing inheritance property is precise:
finiteness of relations in $D$ need not yield finiteness of
principal-ideal intersections in $D'$.

Every row of Table~\ref{tab:positive-results} implies that $D'$ is
Pr\"ufer; stronger conclusions and changes to the standing assumptions
are indicated. A \emph{very finite} extension has every intermediate
ring, including the endpoint, module-finite over the base; a
\emph{coherent pair} has every intermediate ring coherent. These
hypotheses control intermediate rings, whereas finite generation of
an endpoint alone does not.

\begingroup
\small
\setlength{\tabcolsep}{5pt}
\renewcommand{\arraystretch}{1.08}
\begin{longtable}{@{}>{\raggedright\arraybackslash}p{0.455\textwidth}
                    >{\raggedright\arraybackslash}p{0.51\textwidth}@{}}
\caption{Additional hypotheses ensuring Pr\"ufer integral closure.
Each alternative includes its stated conjunction.}
\label{tab:positive-results}\\
\toprule
Additional hypothesis & Result and scope\\
\midrule
\endfirsthead
\multicolumn{2}{l}{\textit{Table \thetable\ continued}}\\
\toprule
Additional hypothesis & Result and scope\\
\midrule
\endhead
\midrule
\multicolumn{2}{r}{\textit{Continued on the next page}}\\
\endfoot
\bottomrule
\endlastfoot
Noetherian; Mori; $H$-domain
& $D$ Noetherian, $D'$ Dedekind
\cite[Tag~09IG]{StacksProject}; \cite[p.~225]{GlazVasconcelos1984}.\\
\addlinespace
Integrally closed; regular; finite weak or global dimension;
Pr\"ufer $v$-multiplication domain; $v$-domain
& $D$ Pr\"ufer
\cite[Cor.~11]{PapickCoherentOverrings1979};
\cite[Prop.~3.1]{TamekkanteAssaadBouba2019}.
Regular means that every finitely generated ideal has finite projective
dimension.\\
\addlinespace
$D'$ module-finite over $D$; $D'$ locally finite-conductor
& \cite[Thm.~3.7, Cor.~3.9]{PicozzaTartarone2008}.
The second includes locally coherent or coherent $D'$ and needs no
coherence assumption on $D$.\\
\addlinespace
Each nonnormal $D_{\mathfrak p}$ has a proper very finite overring
& Each such $D_{\mathfrak p}$ is Noetherian
\cite[Thm.~15, Cor.~16]{Papick1978}.\\
\addlinespace
$D$ local, with a proper very finite domain extension or a proper
integral coherent pair
& $D$ Noetherian
\cite[Thm.~2]{HuckabaPapick1981}; \cite[Rem.~9]{PapickCoherentOverrings1979}.
The extension need not lie in $K$.\\
\addlinespace
Every proper overring coherent; every local overring coherent
& \cite[Thm.~1, Rem.~2]{PapickProperOverrings1979}.
Neither dimension one nor coherence of $D$ is required.\\
\addlinespace
Locally a pseudo-valuation domain
& \cite[Cor.~3.6]{DobbsPVD1978} and localization.
A pseudo-valuation domain has a valuation overring sharing its maximal
ideal.\\
\addlinespace
A canonical module and finitely generated uppers to zero
& \cite[Prop.~4.14]{GlazVasconcelos1984}.
Coherence suffices; dimension one is unnecessary. Evaluation and
endomorphism formulations are discussed below.\\
\addlinespace
Finitely stable; stable; quasi-stable
& \cite[Prop.~3.4, Cor.~3.5]{PicozzaTartarone2010}.
No dimension or coherence hypothesis is needed.\\
\addlinespace
Clifford regular
& $D'$ Pr\"ufer of finite character
\cite[Prop.~3.4, Cor.~4.8]{Bazzoni2011}; no standing assumptions needed.\\
\addlinespace
Totally divisorial; Warfield
& $D$ stable and, in dimension one, Noetherian
\cite[pp.~263--264, Lemmas~3.2--3.3]{Olberding2001}.\\
\addlinespace
Finite expansion over a field $k$: integral over a finitely generated
$k$-subalgebra
& Apply \cite[Thm.~3.5]{Steiner2026} to $D'$.
Integrality preserves dimension and finite expansion; coherence is
unnecessary.\\
\end{longtable}
\endgroup

Since $B$ is not Pr\"ufer, none of the full sufficient hypotheses in
the table holds for our example. In particular, $R$ is non-Noetherian
and nonnormal, $B$ is not module-finite, and some $B_{\mathfrak n}$ is
not finite-conductor. This last failure is sharp: in a one-dimensional
normal local domain, intersections of incomparable principal ideals
are not finitely generated \cite[Rem.~2.2]{GuerrieriLoper2021}.

\subsection{Stability and polynomial relations}
An ideal $J\ne0$ is \emph{stable} if invertible over $(J:J)$.
Finite stability tests finitely generated ideals, stability tests all
nonzero ideals, and quasi-stability replaces invertibility by flatness.
These give a route to Pr\"ufer integral closure through endomorphism rings.
Clifford regularity means that the semigroup of nonzero fractional ideals
modulo principal ideals is von Neumann regular; it implies finite
stability. Total divisoriality requires every ideal of every overring to
be divisorial; Warfield duality implies this condition
\cite{Bazzoni2011,Olberding2001}. Divisoriality of finitely generated
ideals already forces a domain to be Pr\"ufer when normality is added
\cite[Cor.~3.2]{TamekkanteAssaadBouba2019}.

Polynomial relations connect these criteria to the construction.
An \emph{upper to zero} is a nonzero prime of $D[T]$ contracting to zero.
Write $c(f)$ for the ideal of coefficients of $f$; $f$ is primitive if
$c(f)=D$. For arbitrary domains, the characterizations collected in
\cite[Thm.~1.1]{ChangFontana2009} give
\[
\begin{aligned}
 D'\text{ Pr\"ufer}
 &\ \Longleftrightarrow\ D\text{ quasi-Pr\"ufer}\\
 &\ \Longleftrightarrow\ \text{every upper to zero contains a primitive polynomial}\\
 &\ \Longleftrightarrow\ \text{every }x\in K\text{ satisfies a primitive equation over }D\\
 &\ \Longleftrightarrow\ \text{every overring satisfies INC over }D.
\end{aligned}
\]
Here INC means that comparable primes have distinct contractions.
Equivalent conditions are that every overring is quasi-Pr\"ufer, or that
every prime of the Nagata ring $D[T]_N$ is extended from $D$, where $N$
consists of primitive polynomials. In dimension one, these are also
exactly the UMT domains, in which every upper to zero is a maximal
$t$-ideal; in arbitrary dimension, $D'$ is Pr\"ufer exactly when every
overring is UMT \cite[Thms.~1.1, 1.5, Cor.~3.11]{FontanaGabelliHouston1998}.

Our single evaluation upper displays these failures simultaneously.
With $\mathcal H=\ker(R[T]\to R[z])$, $T\mapsto z$,
\cref{lem:separation} gives
\begin{equation}\label{eq:literature-obstruction}
 0\ne\bar uT-\bar v\in\mathcal H\subsetneq\mm[T],\qquad
 R[z]/\mm R[z]\cong(R/\mm)[T].
\end{equation}
Thus $z$ has no primitive equation, and $R[z]$ has comparable primes
above $\mm$. Its fiber has a transcendental residue-field extension,
the obstruction to residual algebraicity studied in
\cite[Thm.~2.3, Cor.~2.8]{AyacheJaballah1997}.
Since $\mm$ is a $t$-ideal, this upper excludes UMT by
\cite[Thm.~1.1]{FontanaGabelliHouston1998}. It also yields
\[
 0\subsetneq\mathcal H\subsetneq\mm[T]
   \subsetneq\mm[T]+TR[T],\qquad \dim R[T]\geq3.
\]
For comparison, a one-dimensional domain with $\dim D[T]=2$ is
quasi-Pr\"ufer: failure produces an upper inside an extended maximal
ideal and hence a chain of length three. In particular, $R$ is not
Jaffard, since that property requires $\dim R[T]=\dim R+1$.

Glaz--Vasconcelos also link finite generation of relations to
stabilization. For a coherent $D$, $J=(a,b)$ with $b\ne0$, and
$\mathcal H_{a/b}=\ker(D[T]\to D[a/b])$, their
Propositions~4.10--4.11 give \cite{GlazVasconcelos1984}
\begin{align*}
 D[JT]\text{ coherent}
 &\ \Longrightarrow\ (J^n:J^n)\text{ eventually constant}\\
 &\ \Longleftrightarrow\ \mathcal H_{a/b}\text{ finitely generated}.
\end{align*}
A canonical module supplies duality on finitely generated torsion-free
modules. With this hypothesis, finite generation of all evaluation
uppers suffices in the proof of their Proposition~4.14. Their
Proposition~4.13 gives the exact ideal-theoretic criterion
\[
 D'\text{ Pr\"ufer}\quad\Longleftrightarrow\quad
 \forall J=(a,b)\ne0\ \exists n\geq1:
 J^n\text{ invertible over }(J^n:J^n).
\]
A failed conjunction does not exclude each factor: the
present argument leaves existence of a canonical module for $R$,
finite generation of $\mathcal H$, and coherence of $R[T]$ and $R[z]$
undetermined. The canonical-ideal claim reported as a private
communication in \cite[p.~336]{PapickCoherentOverrings1979} is not used
as a theorem.

\renewcommand{\sectionname}{}
\section*{Acknowledgments}
We used GPT-5.6 Sol and an agentic harness built around GPT-5.6 Sol and Claude Fable 5 to assist with literature searches, hypothesis testing, the exploration and elimination of potential approaches, wording refinement, and manuscript proofreading. We thank Hieu M. Vu, Tho Tran Huu, Khoi M. N. Nguyen, Dung V. Nguyen, and Quang X. Nguyen for their assistance with hardware-related matters and for providing technical support in the use of the AI tools and agentic system. 

\bibliographystyle{plain}
\bibliography{references}
\end{document}